\documentclass[12pt,dvipsnames]{amsart}

\usepackage{amsmath,mathdots,verbatim,mathtools,nccmath}
\usepackage[T1]{fontenc}
\usepackage[colorlinks=true]{hyperref}
\usepackage[utf8]{inputenc}
\usepackage[sc]{mathpazo}
\usepackage{pstricks}
\usepackage[margin = 2.5cm]{geometry}
\usepackage{tikz}
\usepackage{ytableau}
\usepackage{color}
\usepackage{MnSymbol}
\usepackage{graphics}
\usepackage{graphicx}
\usepackage{nicematrix}
\usepackage{scalerel}
\usepackage{enumitem}
\usepackage[normalem]{ulem}
\newtheorem{theorem}{Theorem}

\newtheorem{lemma}[theorem]{Lemma}

\newtheorem{conj}[theorem]{Conjecture}

\theoremstyle{definition}

\newtheorem{remark}[theorem]{Remark}

\DeclareMathOperator{\sgn}{sgn}

\DeclareMathOperator{\diag}{diag}

\usepackage{enumitem}

\newcommand{\asym}{\mathbf{ASym}}

\newcommand{\sym}{\mathbf{Sym}}

\usepackage{bbm}

\newmuskip\pFqskip
\mathchardef\pFcomma=\mathcode`, % keep a copy of the comma

\title[]{A new antisymmetrizer-to-determinant formula and a conjecture of Colomo and Pronko}

\author{Ilse Fischer and Markus Reibnegger}
\thanks{The authors acknowledge the financial support from the Austrian Science Fund (FWF) grant 10.55776/F1002.}

\begin{document}

\begin{abstract}
Identities asserting that an antisymmetrizer admits a determinantal expression are central to several proofs of counting formulas for alternating sign matrices. Antisymmetrizers appear also frequently in symmetric function theory as, for instance, the Hall–Littlewood polynomials can be realized via antisymmetrizers. In this paper, we establish a new identity of this type, thereby proving a conjecture of Lukas Riegler and one of the authors. We also elaborate on a related antisymmetrizer that may be pivotal in resolving a beautiful conjecture of Colomo and Pronko, and we formulate a version of their conjecture tailored to the suggested approach.
\end{abstract}

\maketitle

\section{Introduction} 

The \emph{antisymmetrizer} of a, say, rational function $F(X_1,\ldots,X_n)$ is defined as 
$$
\asym_{X_1,\ldots,X_n} \left[ F(X_1,\ldots,X_n) \right] = \sum_{\sigma \in \mathfrak{S}_n} \sgn \sigma \cdot F(X_{\sigma(1)},\ldots,X_{\sigma(n)}),
$$
where $ \mathfrak{S}_n$ denotes the symmetric group of order $n$.
\emph{Schur polynomials} are (up to sign) precisely the antisymmetrizers of monomials divided by the Vandermonde determinant 
$\prod_{1 \le i < j \le n}(X_i-X_j)$. However, there are interesting antisymmetrizers that are not (yet?) expressible by a single determinant. For instance, in \cite[(2.8)]{gog} it was shown that 
\begin{equation}
\label{asymmt}
\frac{\asym_{X_1,\ldots,X_n} \left[  
\prod\limits_{i=1}^n X_i^{b_i}  
\prod\limits_{1 \le i < j \le n} (1-X_i + X_i X_j) \right]}{\prod\limits_{1 \le i < j \le n} (X_j-X_i)}
\end{equation} 
evaluated at $(X_1,\ldots,X_n)=(1,\ldots,1)$ is the number of \emph{monotone triangles} with bottom row $b_1,\ldots,b_n$.
In fact, if we do not specialize the $X_i$, then the 
expression is a multivariate generating function of monotone triangles as was shown in \cite{FS23}. By introducing 
three more parameters, this generating function can easily be lifted to a generating function of monotone triangles that generalize Schur polynomials. Antisymmetrizers also appear frequently in 
the theory of symmetric functions. For instance note that the Hall-Littlewood polynomial
$P_{\lambda}(X_1,\ldots,X_n;t)$ of the partition $\lambda$ can also be written as 
$$
P_{\lambda}(X_1,\ldots,X_n;t) = \frac{1}{v_{\lambda}(t)} 
\frac{\asym_{X_1,\ldots,X_n} \left[ \prod_{i=1}^n X_i^{\lambda_i} \prod_{1 \le i < j \le n} (X_i- t X_j)\right]}{\prod_{1 \le i < j \le n} (X_i-X_j)}
$$ 
where 
$$
v_{\lambda}(t) = \frac{(t;t)_{n-\ell(\lambda)}}{(1-t)^{n-\ell(\lambda)}}
\prod_{i \ge 1} \frac{(t;t)_{m_i(\lambda)}}{(1-t)^{m_i(\lambda)}},
$$
$\ell(\lambda)$ denotes the number of (non-zero) parts of $\lambda$, $m_i(\lambda)$ denotes 
the number of parts equal to $i$ and $(a;q)_n= \prod_{k=0}^{n-1} (1- a q^k)$ is the $q$-Pochhammer symbol. There exist common generalizations of the symmetric functions in \eqref{asymmt} and the Hall-Littlewood polynomials that are also expressed in terms of 
antisymmetrizers, see \cite{FG26}; in fact the above mentioned three parameter extension of \eqref{asymmt} is already a common generalization.

\bigskip

Monotone triangles are \emph{Gelfand-Tsetlin patterns} with strictly increasing rows and Gelfand-Tsetlin patterns are 
triangular arrays of integers of the following form 
\begin{equation*}
\begin{array}{ccccccccccc}
& & & & & & a_{1,1} & & & &  \\
& & & & & a_{2,1} & & a_{2,2} & & &  \\
& & & & a_{3,1} & & a_{3,2} & & a_{3,3} & &  \\
& & & \dots & & \dots & & \dots & & \dots &  \\
& & a_{n,1} & & a_{n,2} & & \dots & & \dots & & a_{n,n} 
\end{array}
\end{equation*}
that are weakly increasing in $\nearrow$- and $\searrow$-direction.
Monotone triangles with bottom row 
$1,2,\ldots,n$ are in easy bijective correspondence with $n \times n$ \emph{alternating sign matrices} (ASMs). Alternating sign matrices 
are square matrices with enries in $\{0,\pm 1\}$ such that, in each row and each column, non-zero entries alternate and add up to $1$, see \cite{Bre00} for an account of the early history of ASMs.
Below is a monotone triangle and its corresponding ASM: We have $j$ in row $i$ of the monotone triangle if and only if the top $i$ entries in column $j$ of the ASM sum to $1$.
$$
\begin{array}{ccccccccccc}
& & & & & & 3 & & & &  \\
& & & & & 2 & & 4 & & &  \\
& & & & 1 & & 4 & & 5 & &  \\
& & & 1 & & 2 & & 4 & & 5 &  \\
& & 1 & & 2 & & 3 & & 4 & & 5 
\end{array}
\quad 
\Leftrightarrow 
\quad 
\begin{pmatrix}
0 & 0 & 1 & 0 & 0 \\
0 & 1 & -1 & 1 & 0 \\
1 & -1 & 0 & 0 & 1 \\
0 & 1 & 0 & 0 & 1 \\
0 & 0 & 1 & 0 & 1
\end{pmatrix} 
$$

In the special case $(b_1,\ldots,b_n)=(1,2,\ldots,n)$, the  
antisymmetrizer-to-determinant lemma below (Lemma~\ref{first}) can be applied to express the antisymmetrizer in \eqref{asymmt} as a determinant.  Applying a formula to transform bialternants to Jacobi-Trudi-type determinants (see \eqref{limit1}), the determinant from Lemma~\ref{first} can be used to reduce the enumeration of $n \times n$ ASMs to the evaluation of the \emph{descending plane partition} determinant. The latter was accomplished by Andrews in \cite{And79}. More details about this are provided in Remark~\ref{andrews}. Note that this strategy provides an alternative proof of the ASM enumeration formula, which is a notoriously difficult task---see \cite{Zei96} for the first proof.

\begin{lemma} 
\label{first}
Let $n$ be a positive integer and $f(X)$ be a, say, rational function in $X$, then 
$$
\asym_{X_1,\ldots,X_n} \left[ \prod_{1 \le i \le j \le n} (f(X_j)-X_i) \right] = \det_{1 \le i, j \le n} \left(f(X_i)^j-X_i^j\right).
$$
\end{lemma}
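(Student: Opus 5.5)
I would prove Lemma~\ref{first} by induction on $n$. I would set $Y_i = f(X_i)$ and regard the $Y_i$ as independent of the $X_i$; the identity should then be one between polynomials in $X_1,\ldots,X_n,Y_1,\ldots,Y_n$. Here the antisymmetrizer permutes the pairs $(X_i,Y_i)$ simultaneously. The case $n=1$ reads $Y_1-X_1 = Y_1 - X_1$.

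For the induction step I would split the product according to the largest index $j=n$:
$$
\prod_{1\le i\le j\le n}(Y_j-X_i)=\prod_{1\le i\le j\le n-1}(Y_j-X_i)\cdot \prod_{i=1}^{n}(Y_n-X_i).
$$
The second factor is symmetric in $X_1,\ldots,X_n$ once $Y_n$ is fixed. I would organize the full antisymmetrizer as a sum over the choice $k=\sigma(n)$ of the pair placed in the last slot, followed by an antisymmetrization over the remaining $n-1$ pairs. After permuting, the last factor becomes $\prod_{i=1}^n(Y_k-X_i)$ regardless of the permutation, because the product runs over all $X_i$. So I get
$$
\asym_n=\sum_{k=1}^n (-1)^{n-k}\prod_{i=1}^n (Y_k-X_i)\,\asym_{(X_l,Y_l)_{l\ne k}}\Bigl[\prod_{1\le i\le j\le n-1}(Y_j-X_i)\Bigr],
$$
and by induction the inner antisymmetrizer equals $\det_{l\ne k,\,1\le j\le n-1}(Y_l^j-X_l^j)$.

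The task is then to recognize $\sum_k (-1)^{n-k} P(Y_k)\,M_{k}$, with $P(t)=\prod_{i=1}^n(t-X_i)$, as the expansion of $\det(Y_i^j-X_i^j)_{1\le i,j\le n}$ along its last column. Let $e_r$ denote the elementary symmetric functions of $X_1,\ldots,X_n$, so that $P(t)=\sum_{r=0}^{n}(-1)^{n-r}e_{n-r}t^{r}$. Since $P(X_k)=0$, we have $P(Y_k)=P(Y_k)-P(X_k)=\sum_{r=1}^{n}(-1)^{n-r}e_{n-r}(Y_k^r-X_k^r)$; the $r=0$ term drops because it is constant. Hence the expression equals the determinant of the $n\times n$ matrix whose first $n-1$ columns are $(Y_i^j-X_i^j)$ and whose last column is $\sum_{r=1}^n (-1)^{n-r}e_{n-r}(Y_i^r-X_i^r)$. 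The coefficients $e_{n-r}$ are independent of the row $i$, so the last column is a linear combination of all the columns $(Y_i^r-X_i^r)_i$, $r=1,\ldots,n$. Column operations remove the terms $r<n$, and the coefficient of the $r=n$ term is $e_0=1$. This yields exactly $\det(Y_i^j-X_i^j)$.

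The step I expect to need the most care is the sign and ordering bookkeeping when I regroup the antisymmetrizer by $\sigma(n)=k$. I need the remaining permutation, relabelled order-preservingly onto $\{1,\ldots,n-1\}$, to contribute $\sgn$ times $(-1)^{n-k}$, and this should match the cofactor sign $(-1)^{n+k}$ of the Laplace expansion along column $n$. That matching is routine but should be checked. The trick $P(Y_k)=P(Y_k)-P(X_k)$ is the key idea that makes the column reduction work.
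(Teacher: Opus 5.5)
Your proof is correct, and the sign check you flag works out: grouping by $k=\sigma(n)$, the only inversions of $\sigma$ not coming from the induced permutation of $\{1,\ldots,n-1\}$ are the pairs $(i,n)$ with $\sigma(i)>k$. There are $n-k$ of these, so the sign $(-1)^{n-k}$ agrees with the cofactor sign $(-1)^{n+k}$.

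Your route differs from the paper's. Apart from citing external proofs, the paper derives the lemma from Theorem~\ref{cauchyg}: it sets that theorem's auxiliary variables $Y_1,\ldots,Y_n$ (unrelated to your $Y_i=f(X_i)$) equal to $0$ in the equality of the first and third expressions. In that proof, the antisymmetrizer recursion (splitting off the factors with $i=1$ together with $Y_1$) is matched against the Cauchy-type determinant, i.e.\ the second expression. The required column relation \eqref{LC} is proved by Lagrange interpolation in $X_i^{-1}$ at the nodes $Y_1,\ldots,Y_n$. The passage to the power-type determinant then uses partial fractions and the bialternant-to-Jacobi--Trudi formula \eqref{limit1g}. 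One cannot simply put $Y=0$ inside that recursion, because of the Vandermonde prefactor $\prod_{p\ge 2}(Y_p-Y_1)$.

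You instead run the recursion directly against $\det(Y_i^j-X_i^j)$. The column relation then becomes trivial because $P(t)=\prod_l(t-X_l)$ vanishes at $t=X_k$. This is the same device as the step $\prod_j(p(X_j)-p(X_i))=0$ in the odd case of Lemma~\ref{basecase}. Your argument is therefore shorter and self-contained, needing neither \eqref{limit1g} nor interpolation. The paper's detour pays off by giving the stronger identity of Theorem~\ref{cauchyg}, which also contains the Cauchy determinant.
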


A concise proof of the lemma can be found in \cite[Lemma~3.2]{FisHo25}, while a combinatorial proof is given in \cite[Appendix~A]{FS24}. A generalization (including a proof) is presented in Section~\ref{cauchysec}.

The main purpose of the current paper is to provide a new, related antisymmetrizer-to-determinant formula, thereby proving Conjecture~1.1 from \cite{FisRie}. This conjecture was also presented at the 2014 Oberwolfach Meeting on ``Enumerative Combinatorics'', see \cite{OW14}.  The conjecture is in terms of the symmetrizer,  
which is defined as 
$$
\sym_{X_1,\ldots,X_n} \left[ F(X_1,\ldots,X_n) \right] \coloneq \sum_{\sigma \in \mathfrak{S}_n} F(X_{\sigma(1)},\ldots,X_{\sigma(n)}).
$$
However, note also that 
$$
\sym_{X_1,\ldots,X_n} \left[ \frac{G(X_1,\ldots,X_n)}{\prod_{1 \le i < j \le n} (X_i-X_j)} \right] = 
\frac{\asym_{X_1,\ldots,X_n} \left[ G(X_1,\ldots,X_n) \right]}{\prod_{1 \le i < j \le n} (X_i-X_j)}
$$
since 
$$
 \prod_{1 \le i < j \le n} (X_{\sigma(i)}-X_{\sigma(j)})=\sgn \sigma \cdot \prod_{1 \le i < j \le n} (X_i-X_j),
$$
and thus the conjecture can also be formulated in terms of the antisymmetrizer.

\begin{conj}{\cite[Conjecture 1.1]{FisRie}}
\label{mainconj} 
For non-negative integers $s,t$, define the following rational function in $X_1,\ldots,X_{s+t-1}$
\begin{multline*} 
P_{s,t}(X_1,\ldots,X_{s+t-1})=\prod_{i=1}^{s} X_i^{2s-2i-t+1} (1-X_i^{-1})^{i-1} \prod_{i=s+1}^{s+t-1} X_i^{2i-2s-t} (1-X_i^{-1})^s \\ \times
\prod_{1 \le p < q \le s+t-1} \frac{1-X_p+X_p X_q}{X_q-X_p}
\end{multline*} 
and let $R_{s,t}(X_1,\ldots,X_{s+t-1}) \coloneq \sym_{X_1,\ldots,X_{s+t-1}} \left[ P_{s,t}(X_1,\ldots,X_{s+t-1}) \right]$. If $s \le t$, then 
$$
R_{s,t}(X_1,\ldots,X_{s+t-1}) = R_{s,t}(X_1,\ldots,X_{i-1},X_i^{-1},X_{i+1},\ldots,X_{s+t-1})
$$ 
for all $i \in \{1,2,\ldots,s+t-1\}$. 
\end{conj}

The significance of the conjecture stems from the fact that it implies the following refined enumeration of vertically symmetric ASMs. Vertically 
symmetric ASMs exist only for odd orders as the unique $1$ in the top row needs to be situated in the center, which therefore needs to exist. For orders greater than $1$, they have two symmetrically distributed $1$'s in the second row. In \cite{half}, it was conjectured 
that the number of $(2n+1) \times (2n+1)$ vertically symmetric ASMs that have a $1$ in row $2$ and column $i$ where $1 \le i \le n$ is 
\begin{equation}
\label{refined}  
\frac{\binom{2n+i-2}{2n-2} \binom{4n-i-1}{2n-1}}{\binom{4n-2}{2n-1}} 
\prod_{j=1}^{n-1} \frac{(3j-1)(2j-1)!(6j-3)!}{(4j-2)!(4j-1)!}, 
\end{equation} 
and, in \cite{FisRie}, it was shown that Conjecture~\ref{mainconj} implies this refined enumeration. Note that meanwhile the refined enumeration of vertical symmetric ASMs has been proven by different means in \cite{FisSai}, while Conjecture~\ref{mainconj} remained open.

Clearly, the conjecture also implies 
$$
R_{s,t}(X_1,\ldots,X_{s+t-1}) = R_{s,t}(X_1^{-1},\ldots,X_{s+t-1}^{-1})
$$
and it is only this consequence that is needed to establish the formula in \eqref{refined} for the refined enumeration of vertically symmetric ASMs as described above. Also in \cite{FisRie} it was shown that it suffices to prove Conjecture~\ref{mainconj} for $s=t$ and $s+1=t$. In this paper, we will not rely on this result, however, we will still first establish the 
case $s= \lfloor \frac{n+1}{2} \rfloor$ of Theorem~\ref{antisymtodet} (see Lemma~\ref{basecase}) as the base case of an induction, and this does correspond to the cases $s=t$ and $s+1=t$ of Conjecture~\ref{mainconj}.

\medskip

In order to prove Conjecture~\ref{mainconj}, we compute for non-negative integers $n,s$ with $0 \le s \le \lfloor \frac{n+1}{2} \rfloor$ 
the following antisymmetrizer 
\begin{equation}
\label{D}
D_s(X_1,\ldots,X_n)=\asym_{X_1,\ldots,X_n} \left[ \prod_{i=1}^s p(X_i)^{s-i} q(X_i)^{2i-2s-1} 
\prod_{1 \le i \le j \le n} (p(X_j)-q(X_i)) \right], 
\end{equation}
where $p(X)=X(X-1)$ and $q(X)=p(X^{-1})=-p(X)X^{-3}$.
In the following theorem, we are using Iverson notation: We set $[\text{statement}]=1$ if the statement is true and 
$[\text{statement}]=0$ otherwise. 

\begin{theorem}
\label{antisymtodet} 
For non-negative integers $n,s$ with $0 \le s \le \lfloor \frac{n+1}{2} \rfloor$, we have 
$$
D_s(X_1,\ldots,X_n) = 
(-1)^s \det_{1 \le i,j \le n} \left( p(X_i)^s \left( p(X_i)^{j-(1+[j<2s])s}-q(X_i)^{j-(1+[j<2s])s} \right) \right).
$$
\end{theorem}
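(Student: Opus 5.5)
Theorem~\ref{antisymtodet} interpolates between two regimes. For $s=0$ it is exactly Lemma~\ref{first} with $f=p$ and $X_i$ replaced by $q(X_i)$. Lemma~\ref{first} is stated with $X_i$ itself inside the product, but its proof only uses that $X_i\mapsto f(X_i)$ and $X_i\mapsto X_i$ are two functions of one variable. Hence it holds with any pair $(p,q)$:
$$
\asym\Bigl[\prod_{i\le j}(p(X_j)-q(X_i))\Bigr]=\det\bigl(p(X_i)^j-q(X_i)^j\bigr).
$$
The generalization in Section~\ref{cauchysec} presumably makes this precise. So I would proceed by induction on $s$, downward from the base case $s=\lfloor\frac{n+1}{2}\rfloor$ (Lemma~\ref{basecase}) or upward from $s=0$. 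Either way I need a recursion linking $D_s$ and $D_{s\pm1}$, together with the matching column manipulation of the determinant.

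The first step is to understand the determinant side. Factoring $p(X_i)^s$ out of row $i$ gives a prefactor $\prod_i p(X_i)^s$, and the $(i,j)$ entry becomes $p^{k}-q^{k}$ with $k=j-2s$ for $j<2s$ and $k=j-s$ for $j\ge 2s$. The exponents are therefore $1-2s,\dots,-1$ followed by $s,s+1,\dots,n-s$. Since $p^{-k}-q^{-k}=-(p^k-q^k)/(pq)^k$, the negative columns are, up to sign, columns of the form $p^k-q^k$ weighted by $(pq)^{-k}$. The missing exponents $0,1,\dots,s-1$ (and $k=0$ gives a zero column) are what the factor $\prod_{i\le s}p(X_i)^{s-i}q(X_i)^{2i-2s-1}$ in \eqref{D} must encode.

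Next I would handle the antisymmetrizer side by reducing to a Cauchy/Lemma~\ref{first}-type identity with general exponents. I would prove a generalized lemma of the form
$$
\asym\Bigl[\prod_{i=1}^n g_i(X_i)\prod_{i\le j}(p(X_j)-q(X_i))\Bigr]=\det(\cdots),
$$
valid when the $g_i$ are monomials in $p,q$ of a special staircase shape. The idea is to expand $\prod_{i\le j}(p(X_j)-q(X_i))$ as in the proof of Lemma~\ref{first}. That proof writes the product as a Vandermonde-like determinant, using
$$
\prod_{i<j}(p(X_j)-q(X_i))\prod_i (p(X_i)-q(X_i)),
$$
and then antisymmetrizes. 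Concretely, I would prove the $n$-variable statement by Laplace expansion and induction on $n$: isolate $X_n$, which appears only in factors $(p(X_n)-q(X_i))$, and write $\asym_{X_1,\ldots,X_n}$ as a sum over which variable sits in position $n$ of $\asym_{n-1}$. The weight $p(X_i)^{s-i}q(X_i)^{2i-2s-1}$ on the first $s$ positions then shifts as variables move. Comparing with the expansion of the determinant along its last column requires an identity of the form
$$
\sum_{k} \prod_{i\ne k}(p(X_k)-q(X_i))\cdot(\text{minor}) = \ldots,
$$
which I would establish by partial fractions / Lagrange interpolation in $p(X_k)$.

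The main obstacle I expect is that the weights in \eqref{D} mix $p$ and $q$ powers with negative exponents. These only collapse into the two-block column structure because $q=-pX^{-3}$ is not used; the statement is about arbitrary-looking $p,q$. So the identity must hold formally for independent $p(X_i),q(X_i)$ as long as $s\le\lfloor\frac{n+1}{2}\rfloor$, and this constraint must enter through a vanishing argument. My first attempt would be a uniqueness argument. Both sides are antisymmetric, and after dividing by $\prod_i p(X_i)^s$ and clearing denominators both are polynomials in $P_i=p(X_i)$ and $Q_i=q(X_i)$. Both vanish when $P_i=Q_j$ for suitable pairs, and both have the same degree bounds. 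This reduces the comparison to one leading coefficient, which I would check by setting all $Q_i\to0$ (or $Q_i\to\infty$ for the negative-power block). The delicate part is verifying the vanishing on $P_i=Q_j$ for the antisymmetrizer. For $i\le j$ it is immediate from the factors. For $i>j$ it should follow by pairing the permutations $\sigma$ and $\sigma\circ(i\,j)$, and this is where the bound on $s$ will have to be used.
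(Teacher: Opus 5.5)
\textbf{There is a genuine gap.} The load-bearing claim of your plan is false. The identity does \emph{not} hold with $P_i=p(X_i)$ and $Q_i=q(X_i)$ treated as independent indeterminates once $s\ge 2$, so the specific relation $q(X)=-p(X)X^{-3}$ with $p(X)=X(X-1)$ must be used somewhere.

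To see this, scale $P$ and $Q$ jointly. Every term of $D_s$ is then homogeneous of degree $\binom{n+1}{2}-\binom{s+1}{2}$. The determinant is homogeneous of degree $\sum_{j=1}^n (j-[j<2s]s)=\binom{n+1}{2}-s(2s-1)$. It is nonzero as a function of independent $P_i,Q_i$, because its column functions $P^s(P^{k}-Q^{k})$ have distinct exponents $k$. The two degrees differ by $3\binom{s}{2}$. A concrete instance is $n=3$, $s=2$ with all $Q_i=Q$: there $D_2=Q^{-3}\prod_i(P_i-Q)\prod_{i<j}(P_j-P_i)$, while the right-hand side equals $-(P_1P_2P_3)^{-1}$ times this.

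Consequences for your plan:
\begin{itemize}
\item Your uniqueness argument (vanishing on $P_i=Q_j$, degree bounds, one leading coefficient) is aimed at a false statement.
\item Likewise, no partial-fraction or Lagrange-interpolation identity valid for independent $p(X_k),q(X_k)$ can close the induction.
\item The induction on $s$ you mention comes with no recursion linking $D_s$ and $D_{s\pm1}$ in the same variables.
\end{itemize}

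Your ``concretely'' skeleton, induction on $n$ by isolating $X_n$, is the right one. It is the paper's, with base case $n=2s-1$ supplied by Lemma~\ref{basecase}. For $n\ge 2s$ the variable $X_n$ carries no weight, and Laplace expansion produces the determinant whose last column is $\prod_{k}(p(X_i)-q(X_k))$.

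What is missing is the key step. You must show that this column equals the target last column plus a combination of the other columns, with coefficients $b_{j,k}\,e_k(q(X_1),\ldots,q(X_n))$; this is Lemma~\ref{generalcoeff}. After the reduction via $e_k=e_k^{(i)}+q(X_i)e^{(i)}_{k-1}$, it comes down to writing $p(X)^d-q(X)^d$ for $1\le d\le s-1$ as an explicit combination of $q(X)^{-j}-p(X)^{-j}$, $1\le j\le 2d$. Note that these exponents $d$ do not occur among the determinant's columns. The combination in question follows from
\[
\sum_{i\ge 1}\Bigl(\binom{d+i}{2d-i}+\binom{d+i-1}{2d-i-1}\Bigr)p(X)^{-i}=(-1)^d p(X)^d X^{-3d}+p(X)^{-2d}X^{3d}
\]
together with its image under $X\mapsto X^{-1}$. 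This is exactly where $q(X)=p(X^{-1})=-p(X)X^{-3}$ enters. After multiplying by $p(X)^{2d}$, it becomes a Lucas-type identity expressing $X^{3d}+(1-X)^{3d}$ as a polynomial in $p(X)=X(X-1)$.

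The same identity is needed in the even step of the base case (Lemma~\ref{coeffbase}). Without an ingredient of this kind, your plan cannot be completed.
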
 

Although the proof of this theorem spans several pages (see Section~\ref{proof}), the real difficulty lies in identifying rational functions whose antisymmetrizers admit a determinantal expression, together with the appropriate Ansatz for that determinant. This underscores that our understanding is still incomplete. In Section~\ref{colpro}, we discuss a related antisymmetrizer for which we have not yet found a determinantal representation, despite evidence suggesting that one should exist.

It is worth noting that experiments suggest that the result is also true if we set $p(X)=X(X-\rho)$ and $q(X)=-p(X)X^{-3}$ where 
$\rho$ is a third root of unity, and it seems conceivable that the proof can be generalized to this setting.

\bigskip

{\bf Outline of the paper.} In Section~\ref{colpro}, we elaborate on a related conjecture of Colomo and Pronko alongside a possible approach to prove it. In particular, we offer a formulation of the conjecture that relates the matrices appearing in the conjecture to a certain antisymmetrizer that is central to this approach, and thus could be helpful in proving it. In Section~\ref{proof}, we prove the main result Theorem~\ref{antisymtodet}. There we also argue why Conjecture~\ref{mainconj} is a consequence of Theorem~\ref{antisymtodet}.
In Section~\ref{cauchysec}, we present a common generalization of Lemma~\ref{first} and the Cauchy determinant evaluation and pose it as an open problem to find a similar generalization of Theorem~\ref{antisymtodet}.

\section{The conjecture of Colomo and Pronko and a related antisymmetrizer} 
\label{colpro}

In this section, we discuss a fascinating conjecture of Colomo and Pronko 
\cite{CP24,CP25} and propose a possible approach to prove it that involves an antisymmetrizer that has some similarities with the one that is dealt with in Theorem~\ref{antisymtodet}.

For integers $n,s$ with $0 \le s \le \lfloor \frac{n}{2} \rfloor$, consider 
$$F_{n,s}= \# \text{ of $n \times n$ ASMs that have only zeros in the bottom-left $s \times s$ corner}. 
$$
Colomo and Pronko provide a conjectural 
$s \times s$ determinant for the number of such ASMs. Enumerations of classes of ASMs 
(such as several symmetry classes, see \cite{Kup00}) are so far typically given by product formulas. The reason is 
that such product formulas are---in contrast to the formula conjectured by Colomo and Pronko---much easier 
to detect because they can be guessed from small data by some type of interpolation: In these cases, the quotient sequence of the numbers is often expressible by a simple rational function. Of course proving such formulas is then also a different task and typically quite involved, however, a fascinating feature of the formula of Colomo and Pronko is that the guessing part is a major challenge too. 

Concretely, 
let $A_n$ denote the number of $n \times n$ ASMs and $A_{n,k}$ the number of $n \times n$ ASMs, where the unique $1$ in the top 
row is located in the $k$-th column, and set 
\begin{multline*} 
m_{i,j} =  \sum_{0 \le l \le k \atop {1 \le p \le n-s+i \atop 1 \le q \le n-s+j}} 
(-1)^{i+j+k+p+q} \binom{k}{l} \left( \binom{i-1}{i+l-k-p} - (-1)^i \binom{i-1}{i+l-k-p-1} \right) \\
\times 
\left( \binom{j-1}{j-l-q} + (-1)^j \binom{j-1}{j-l-q-1} \right) \frac{A_{n-s+i,p} A_{n-s+j,q}}{A_{n-s+i} A_{n-s+j-1}}  
\end{multline*}
for $i,j=1,\ldots,s$ as well as $M=(m_{i,j})_{1 \le i,j \le s}$, then one version of their conjecture (see \cite[Conjecture~1]{CP25}) is that 
$$
F_{n,s} = A_n \det (1-M).
$$

\bigskip

Using the well-known bijection between monotone triangles with bottom row $1,2,\ldots,n$ and 
$n \times n$ ASMs, $F_{n,s}$ is also the number of monotone triangles with bottom row $1,2,\ldots,n$ such that 
the bottom $s$ elements of the $i$-th $\nearrow$-diagonal of the monotone triangle are $i$ for $1 \le i \le s$, 
counting $\nearrow$-diagonals from left to right. 

Below is an example of such an ASM for $s=3$ and $n=8$ on the 
left and the corresponding monotone triangle on the right, with the fixed entries highlighted in orange.
$$
\begin{pmatrix}
0 & 0 & 0 & 1 & 0 & 0 & 0 & 0 \\
0 & 0 & 1 & -1 & 0 & 1 & 0 &0 \\
0 & 1 & -1 & 0&  1 &-1& 1& 0 \\
1 & -1 & 1 & 0 & -1& 1& -1 & 1 \\
0 & 1 & 0 &  0& 0 &-1&  1 & 0\\
\color{orange}0 & \color{orange}0 & \color{orange}0 & 0 & 0 & 1 &0 & 0 \\
\color{orange}0 & \color{orange}0 & \color{orange}0 & 1 & 0 &0 & 0 & 0 \\
\color{orange}0 & \color{orange}0 & \color{orange}0 & 0 & 1 & 0 & 0 & 0 
\end{pmatrix} 
\Leftrightarrow 
\begin{array}{ccccccccccccccccccccccc}
&&& & & & & & & & 4 & & & & & &&&  \\
&&& & & & & & & 3 & & 6 & & & & &&&  \\
&&& & & & & & 2 & & 5 & & 7 & & & &&&  \\
&&& & & & & 1 & & 3 & & 6 & & 8 & & &&&  \\
&&& & & & 1 & & 2 & & 3 & & 7 & & 8 & &&& \\
&&&&&\color{orange}1&&\color{orange}2&&\color{orange}3&&6&&7&&8&& \\
&&&&\color{orange}1&&\color{orange}2&&\color{orange}3&&4&&6&&7&&8\\
&&&\color{orange}1&&\color{orange}2&&\color{orange}3&&4&&5&&6&&7&&8
\end{array}
$$
Corollary~3 from  \cite{gog} alongside with (2.8) then implies the following. Let 
\begin{equation}
\label{fasym} 
F_s(X_1,\ldots,X_n)= \asym_{X_1,\ldots,X_n} \left[ \prod_{i=1}^s (1-X_i)^s 
\prod_{1 \le i \le j \le n} (X_i^{-1}+ X_j-1) \right],
\end{equation}then 
$$
F_{n,s} = \left. \frac{F_s(X_1,\ldots,X_n)}{\prod_{1 \le i < j \le n} (X_j-X_i)}  \right|_{(X_1,\ldots,X_n)=\mathbf{1}}.
$$
Comparing this to \eqref{D}, we see that the rational functions to which we apply the antisymmetrizer enjoy some 
structural similarities: On the one hand, we have the factor $\prod_{1 \le i \le j \le n} (p(X_j)-q(X_i))$ in \eqref{D} and similarly 
the factor $\prod_{1 \le i \le j \le n}(X_i^{-1}+X_j-1)$ in \eqref{fasym}, which is also a common feature with Lemma~\ref{first} where we have 
$\prod_{1 \le i \le j \le n} (f(X_j)-X_i)$ (and in this case the product comprises the whole expression), however, in contrast to Lemma~\ref{first}, the 
first $s$ variables $X_1,\ldots,X_s$ play a different role than the variables $X_{s+1},\ldots,X_n$ in \eqref{D} as well as 
in \eqref{fasym}. Also, in both cases, the relevant range for $s$ is between $0$ and (approximately) $\frac{n}{2}$. 

To prove the conjecture of Colomo and Pronko, it would suffice to provide a formula for 
$F_s(X_1,\ldots,X_n)$ similar to the one for $D_s(X_1,\ldots,X_n)$ in Theorem~\ref{antisymtodet}. 
While we have not succeeded in this task so far, we offer a formulation of their conjecture that
is related to $F_s(X_1,\ldots,X_n)$, which may help to prove the conjecture using this approach. 

Note that there is the following simple approach to compute a generalization of $F_{s}(X_1,\ldots,X_n)$ recursively. Namely, let 
$$
G_{s,t}(X_1,\ldots,X_n)= \asym_{X_1,\ldots,X_n} \left[ \prod_{i=1}^t (1-X_i)^s 
\prod_{1 \le i \le j \le n} (X_i^{-1}+ X_j-1) \right],
$$
so that $F_s(X_1,\ldots,X_n)=G_{s,s}(X_1,\ldots,X_n)$, 
then we have  
$$
G_{s,t}(X_1,\ldots,X_n) = \sum_{i=1}^n (-1)^{i+1} (1-X_i)^{s} \prod_{j=1}^n (X_i^{-1}+X_j-1) 
G_{s,t-1}(X_1,\ldots,\widehat{X_i},\ldots,X_n)
$$
for $t>0$, and 
$$
G_{s,0}(X_1,\ldots,X_n) = \sum_{i=1}^n (-1)^{i+1} \prod_{j=1}^n (X_i^{-1}+X_j-1) 
G_{s,0}(X_1,\ldots,\widehat{X_i},\ldots,X_n).
$$
See also the proof of  Lemma~\ref{basecase}, where such a recursion is justified in more detail.

We consider the case $s=0$ and note that 
\begin{multline*} 
F_{n,0} = \left. \frac{\asym_{X_1,\ldots,X_n} \left[ 
\prod_{1 \le i \le j \le n} (X_i^{-1}+ X_j-1) \right]}{\prod_{1 \le i < j \le n} (X_j-X_i)}  \right|_{(X_1,\ldots,X_n)=\mathbf{1}} \\
= 
\left. \frac{\asym_{X_1,\ldots,X_n} \left[ 
\prod_{1 \le i \le j \le n} ((1+X_i)^{-1}+ X_j) \right]}{\prod_{1 \le i < j \le n} (X_j-X_i)}  \right|_{(X_1,\ldots,X_n)=\mathbf{0}}.
\end{multline*}
By Lemma~\ref{first}, this is equal to 
\begin{equation}
\label{specialcase} 
\left. \frac{\det_{1 \le i,j \le n} \left(X_i^j - (-1-X_i)^{-j}\right)}{\prod_{1 \le i < j \le n} (X_j-X_i)}  
\right|_{(X_1,\ldots,X_n)=\mathbf{0}}.
\end{equation} 
Next we use the following formula to transform a bilaternant into a Jacobi-Trudi type formula: Suppose $f_j(X)$ are formal Laurent series and $f_{j,k}$ is the coefficient of $X^k$ in $f_j(X)$, then 
\begin{equation}
\label{limit1g} 
\frac{\det_{1 \le i,j \le n} (f_j(X_i))}{\prod_{1 \le i < j \le n}(X_j-X_i)} = \det_{1 \le i,j \le n} \left( \sum_{k \in \mathbb{Z}} f_{j,i+k-1} h_k(X_1,\ldots,X_n) \right),
\end{equation} 
where $h_k(X_1,\ldots,X_n)$ denotes the $k$-th complete homogeneous symmetric function, which are extended as follows to negative integers.
$$
h_{k}(X_1,\ldots,X_n) = 
\begin{cases}
0, &  -n < k < 0, \\
(-1)^{n+1} X_1^{-1}  \dots X_n^{-1} h_{-k-n}(X_1^{-1},\ldots,X_n^{-1}), & k \le -n.
\end{cases} 
$$
For a proof see, for instance, 
\cite[Lemma~7.2]{FS23}.
In particular, if $f_j(X)$ are formal power series, then 
\begin{equation}
\label{limit1}
\left. \frac{\det_{1 \le i,j \le n} (f_j(X_i))}{\prod_{1 \le i < j \le n}(X_j-X_i)}  \right|_{(X_1,\ldots,X_n)=\mathbf{0}}= \det_{1 \le i,j \le n} \left( f_{j,i-1} \right).
\end{equation} 
We apply this to $f_j(X)=X^j - (-1-X)^{-j}$, see \eqref{specialcase}, in which case we have 
$f_{j,i} = \delta_{i,j}+(-1)^{i}\binom{-i-1}{j-1}=\delta_{i,j}+(-1)^{i+j-1}\binom{i+j-1}{j-1}$ by the Binomial Theorem, so that 
$
\det_{1 \le i,j \le n} \left( f_{j,i-1} \right) = \det_{1 \le i,j \le n} \left( S_n + B_n\right), 
$
where 
$$
S_n = \left(\delta_{i-1,j}  \right)_{1 \le i,j \le n} \quad \text{and} \quad B_n= \left((-1)^{i+j}\binom{i+j-2}{j-1} \right)_{1 \le i,j \le n}.
$$
We also set $M_n=(S_n + B_n)^T=S_n^{T} + B_n$.

\begin{remark}
\label{andrews}
\it
As $F_{n,0}$ is the number of $n \times n$ ASMs, it now follows that, in order to prove the enumeration 
formula for $n \times n$ ASMs, it suffices to show that 
$
\det (M_n) = \prod_{i=0}^{n-1} \frac{(3i+1)!}{(n+i)!}. 
$
Andrews showed in \cite{And79}  
\begin{equation}
\label{andrews} 
\det_{1 \le i,j \le n-1} \left( \delta_{i,j} + \binom{i+j}{j-1} \right) = \prod_{i=0}^{n-1} \frac{(3i+1)!}{(n+i)!}.
\end{equation} 
Note that the evaluation of $\det(M_n)$ can be reduced to the evaluation of 
the determinant in \eqref{andrews} as 
$$
\left( \binom{i-1}{j-1} \right)_{1 \le i,j \le n} \cdot 
\left( \delta_{i-1,j} + (-1)^{i+j} \binom{i+j-2}{j-1} \right)_{1 \le i, j \le n} \cdot 
\left( \binom{j-1}{i-1} \right)_{1 \le i,j \le n} =
\left( \delta_{i,j} + \binom{i+j-2}{j-2} \right)_{1 \le i, j \le n}, 
$$
which follows from the Chu-Vandermonde summation, 
$$
\det_{1 \le i, j \le n} \left( \binom{i-1}{j-1} \right) = \det_{1 \le i, j \le n} \left( \binom{j-1}{i-1} \right)=1
$$ 
(the latter identities follow as the matrices are triangular with $1$'s on the main diagonal) 
and 
$$
\det_{1 \le i,j \le n-1} \left( \delta_{i,j} + \binom{i+j}{j-1} \right) = 
\det_{1 \le i,j \le n} \left( \delta_{i,j} + \binom{i+j-2}{j-2} \right)
$$
(by expanding the second determinant along the first column). 

\end{remark} 

Our crucial observation is that the conjecture of Colomo and Pronko can be expressed in terms of determinants of matrices that appear in the LU-decomposition of $B_n$ and $M_n$. 
The LU-decomposition of $B_n$ is rather simple and follows from the Chu-Vandermonde summation. 
(This decomposition is also behind the computation in the remark.) Setting 
$$
LB_n= \left((-1)^{i+j} \binom{i-1}{j-1} \right)_{1 \le i,j \le n} \quad \text{and} \quad 
UB_n= LB_n^T, 
$$
we have 
$$
B_n = LB_n \cdot UB_n.
$$

We let $L_n,U_n$ be the matrices in the LU-decomposition of $M_n$. We define 
\begin{align*} 
A_n & = ((-1)^i \delta_{i,j})_{1 \le i,j \le n}, \\
H_n &= A_n \cdot U_n \cdot A_n \cdot (I_n - S_n^T)^{-1}, \\
G_n &= \diag_{1 \le i \le n} ((H_n)^{-1}_{i,i}) \cdot H_n.
\end{align*} 

\begin{remark} 
\it There seems to be the following formula for $H_n$
$$
H_n =\left[ \left( \sum_{k=0}^{j-1} (-1)^k \binom{k-1+j-i}{j-i} \frac{(j-k)_{2k}}{(1)_k (2j)_k} \right)_{1 \le i,j \le n} \right]^{-1},
$$
which could be extracted from the formulation of the conjecture that Colomo and Pronko gave. 
We also seem to have the following
$$
G_n = A_n \cdot L_n^{T}  \cdot A_n \cdot  (I_n-S_n^T)^{-1} \cdot (I_n - S_n^T +  \left[ S_n^T \right]^2).
$$
\end{remark} 

We use the following notation: Suppose $M$ is a matrix and $s$ is a non-negative integer, then $(M)_s$ denotes the $s \times s$ submatrix of $M$ in the 
bottom right corner. Here is the reformulated conjecture of Colomo and Pronko. (We do not prove the equivalence 
as we do not see any value in this.)

\begin{conj} Suppose $n$ is a positive integer and $s$ is a non-negative integer with 
$0 \le s \le \lfloor \frac{n}{2} \rfloor$. Then the number $F_{n,s}$ of $n \times n$ ASMs that have only zeros in the bottom left $s \times s$ corner is
$$
\det(H_{n-s}) \cdot \det \left[ (G_n^T)_s \cdot (H_n)_s - (I_n+(-1)^n A_n \cdot LB_n  \cdot S_n)_s \cdot (I_n+(-1)^{n+1} S_n^T \cdot UB_n \cdot A_n)_s \right].
$$
\end{conj}
Note that $\det(H_{n-s})$ is the number of $(n-s) \times (n-s)$ ASMs. This is because 
$A_{n} = \det(M_{n}) = \det(U_{n}) = \det(H_{n})$ by the definition of the $H_n$, the multiplicativity of the determinant and 
$\det(A_{n})=\det \left( (I_n-S_n^T)^{-1} \right)=1$.

\section{Proofs of Theorem~\ref{antisymtodet} and Conjecture~\ref{mainconj}}
\label{proof} 
 
First we argue why Theorem~\ref{antisymtodet} implies Conjecture~\ref{mainconj}. Setting 
$$
B_{s}(X_1,\ldots,X_n) = \prod_{i=1}^s p(X_i)^{s-i} q(X_i)^{2i-2s-1} \prod_{1 \le i \le j \le n} (p(X_j)-q(X_i)),
$$
which is just the rational function of which $D_s(X_1,\ldots,X_n)$ is the antisymmetrizer,
it can easily be seen that 
\begin{multline*} 
\frac{B_{s}(X_1,\ldots,X_n)}{P_{s,n+1-s}(X_1,\ldots,X_n)} = (-1)^{s+n+s n} 
\prod_{i=1}^n X_i^{2s-n+1} (X_i+X_i^{-1}-1) (1+X_i^{-1}) (1-X_i)^{-s+1} \\ \times 
\prod_{1 \le i < j \le n} (1-X_i X_j)(X_i-X_j).
\end{multline*} 
This quotient has two properties that are important for us: First, as the quotient is symmetric, it can be taken outside any symmetrizer or antisymmetrizer. Second, if we replace $X_i$ by $X_i^{-1}$ for any fixed $i \in \{1,2,\ldots,n\}$, then this leaves the polynomial invariant up to the factor $(-1)^{s+1} X_i^{3s}$. Now the implication follows as the determinantal expression in Theorem~\ref{antisymtodet} for $D_s(X_1,\ldots,X_n)$ is also invariant up to the factor $(-1)^{s+1} X_i^{3s}$ if we replace $X_i$ by $X_i^{-1}$ for any fixed $i \in \{1,2,\ldots,n\}$.

\bigskip

We prove Theorem~\ref{antisymtodet} by induction with respect to $n$. 
The base case $n=2s-1$ follows from the following lemma. 
Note that in the case $n$ is odd, it can easily be seen that the determinant in Lemma~\ref{basecase} agrees 
with the determinant in Theorem~\ref{antisymtodet} for $s=\lfloor \frac{n+1}{2} \rfloor$. This is not so clear for 
$n$ is even, however, this case is not needed for the base case of Theorem~\ref{antisymtodet}.  

\begin{lemma}
\label{basecase} 
For any positive integer $n$, we have   
$$D_{\lfloor \frac{n+1}{2} \rfloor}(X_1,\ldots,X_n) = 
\det_{1 \le i, j \le n} \left( p(X_i)^{\lfloor \frac{n+1}{2} \rfloor} \left( q(X_i)^{-j} - p(X_i)^{-j} \right) \right).$$
\end{lemma}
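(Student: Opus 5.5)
The plan is to reduce the claim to Lemma~\ref{first}, and then compare two antisymmetrizers of the same product with different monomial prefactors. Write $m=\lfloor\frac{n+1}{2}\rfloor$, $p_i=p(X_i)$, $q_i=q(X_i)$ and $\Pi=\prod_{1\le i\le j\le n}(p_j-q_i)$.

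\textbf{Step 1: rewrite the determinant as an antisymmetrizer.} Apply Lemma~\ref{first} in the variables $Y_i=q_i^{-1}$ with $f(Y)=p^{-1}$. This is legitimate because $p_i^{-1}$ is a function of $X_i$, and permuting the $X_i$ permutes the pairs $(p_i,q_i)$ simultaneously. It gives
$$
\asym_{X_1,\ldots,X_n}\Big[\prod_{1\le i\le j\le n}(p_j^{-1}-q_i^{-1})\Big]=\det_{1\le i,j\le n}\left(p_i^{-j}-q_i^{-j}\right).
$$
Next use $p_j^{-1}-q_i^{-1}=(q_i-p_j)/(p_jq_i)$. Pulling the symmetric factor $\prod_i p_i^m$ into the antisymmetrizer, the determinant in the lemma becomes
$$
\pm\,\asym_{X_1,\ldots,X_n}\Big[\prod_{i=1}^n p_i^{m-i}q_i^{i-n-1}\;\Pi\Big],
$$
with an explicit sign $(-1)^{n+\binom{n+1}{2}}$ that must be tracked. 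The lemma is therefore equivalent to
$$
\asym\big[(w_1-w_2)\,\Pi\big]=0 \quad (\text{up to that sign}),
$$
where $w_1=\prod_{i\le m}p_i^{m-i}q_i^{2i-2m-1}$ and $w_2=\prod_{i=1}^n p_i^{m-i}q_i^{i-n-1}$. For $n=2m-1$ the ratio is $w_1/w_2=\prod_{i\le m}q_i^{i-1}\prod_{i>m}p_i^{i-m}q_i^{2m-i}$, and there is a similar formula for $n=2m$.

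\textbf{Step 2: a vanishing criterion for adjacent transpositions.} Fix $k$ and look at the factors of $\Pi$ that involve $X_k$ or $X_{k+1}$. All of them, namely $(p_k-q_i),(p_{k+1}-q_i)$ for $i<k$, $(p_j-q_k),(p_j-q_{k+1})$ for $j>k+1$, and $(p_k-q_k)(p_{k+1}-q_{k+1})$, are symmetric under $X_k\leftrightarrow X_{k+1}$. The only exception is $(p_{k+1}-q_k)$. Hence, if $h$ is a rational function such that $h\cdot(p_{k+1}-q_k)$ is symmetric in $X_k,X_{k+1}$, then $\asym[h\,\Pi]=0$. The basic instances are
$$
h=g\cdot\frac{a(X_k,X_{k+1})}{p_{k+1}-q_k}\,\bigl(\text{anything symmetric in }X_k,X_{k+1}\bigr),
$$
with $g$ not depending on $X_k,X_{k+1}$. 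A useful special case: the polynomial $p_{k+1}-q_k$ itself, times a symmetric function, gives a weight that can be shifted between positions $k$ and $k+1$. Concretely, $\asym[u\,(p_{k+1}-q_k)\,\Pi]=\asym[u'\,(p_{k}-q_{k+1})\,\Pi]$ whenever $u'$ is $u$ with $X_k$ and $X_{k+1}$ swapped. Expanding these relations yields exchange rules of the form
$$
p_{k+1}^aq_k^b\;\longleftrightarrow\;\text{combinations of } p_k^{a'}q_{k+1}^{b'}
$$
inside $\asym[\,\cdot\,\Pi]$. Throughout, the relation $q=-pX^{-3}$, equivalently the involution $X\mapsto X^{-1}$ exchanging $p$ and $q$, will be used to rewrite monomials.

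\textbf{Step 3: the telescoping, which is the main obstacle.} Starting from $w_2$, I would move the exponents one index at a time using the exchange rules of Step 2 until $w_1$ is reached. Each move should generate error terms of the form $h\,\Pi$ with $h\,(p_{k+1}-q_k)$ symmetric, which therefore vanish. The hard part is to find the right order of moves and to verify that the error terms really have the required symmetric form. The tail indices $i>m$ are where the exponents of $w_1/w_2$ grow in both $p$ and $q$, so these are the critical ones.

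If the direct telescoping becomes unmanageable, my fallback is induction on $n$. Expand $D_m$ by the variable placed in the last position: since $w_1$ carries no weight there, this variable contributes exactly $\prod_{i\le n}(p_\ell-q_i)$. Expand the determinant along its last column in parallel, and compare the two recursions using the $G_{s,t}$-type recursion described in Section~\ref{colpro}. The parity change of $m$ between $n-1$ and $n$ will require treating odd and even $n$ separately. I expect this matching of recursions to be the main difficulty of that route as well.
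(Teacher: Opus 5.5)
\textbf{There is a genuine gap: the proposal stops where the content of the lemma begins.} Your Step 1 is correct. Lemma~\ref{first} holds verbatim for $\prod_{i\le j}(F(X_j)-G(X_i))$ with arbitrary $F,G$, since both sides are polynomials in the pairs $(F(X_i),G(X_i))$. Your reduction to $\asym[w_1\Pi]=(-1)^{n+\binom{n+1}{2}}\asym[w_2\Pi]$, sign included, checks out, and the vanishing criterion in Step 2 is also right. But Step 3 contains no argument, and two things show the telescoping plan cannot work as formulated.

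First, the identity is not a formal consequence of transposition relations with $p_i,q_i$ treated as independent. For $n=2$ it reduces to $p_2q_2-p_1q_1=(p_2^{-1}+q_2^{-1})-(p_1^{-1}+q_1^{-1})$. This holds only because $pq-p^{-1}-q^{-1}=3$ for $p=X(X-1)$, $q=p(X^{-1})$, and you give no mechanism for where and how such a relation enters.

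Second, the concrete exchange rule you state is false. For $n=2$, $k=1$, $u=1$, the function $(p_1-q_2)\Pi$ is symmetric, so $\asym[(p_1-q_2)\Pi]=0$. On the other hand, $\asym[(p_2-q_1)\Pi]=(p_1-q_1)(p_2-q_2)\bigl((p_2-q_1)^2-(p_1-q_2)^2\bigr)\neq 0$. What your criterion actually yields is $\asym[(p_k-q_{k+1})S\,\Pi]=0$ for $S$ symmetric in $X_k,X_{k+1}$.

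\textbf{The fallback.} This is the paper's route, but as stated it breaks for odd $n$ and omits the key identity for even $n$.

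For $n=2m-1$, removing the last (unweighted) variable leaves $D_m$ in $2m-2$ variables. That is not the case $s=m-1$ supplied by the induction hypothesis. The paper instead peels off $X_1$, whose weight $p_1^{m-1}q_1^{-n}$ leaves exactly $D_{m-1}$. It then turns the new column $p_i^{m}q_i^{-n}\prod_k(p_k-q_i)=p_i^m\sum_j(-1)^{n+j}e_j(p)\,q_i^{-j}$ into the desired one by subtracting the same sum with $q_i^{-j}$ replaced by $p_i^{-j}$, which is $0$.

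For even $n$, peeling off $X_n$ is right. But you must then write $\prod_k(p_i-q_k)$ as a combination of the columns $p_i^{n/2}(q_i^{-j}-p_i^{-j})$, with coefficients independent of $i$ and coefficient $1$ on column $n$. That is Lemma~\ref{coeffbase}. It rests on $\sum_{j\ge1}\bigl(\binom{d+j}{2d-j}+\binom{d+j-1}{2d-j-1}\bigr)(q^{-j}-p^{-j})=p^d-q^d$, which is proved through $s(D)+p\,s(D-2)=X^D+(1-X)^D$ with $s(D)=\sum_m\binom{D-m}{m}p^m$. This is precisely where the specific $p$ and $q$ are used, and your proposal has nothing in its place.
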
 

Also Lemma~\ref{basecase} will be proved by induction with respect to $n$ and here we need to distinguish between the case $n$ is even and $n$ is odd. The case $n$ is odd will be rather simple, while the case $n$ is even requires the following 
lemma, which involves the elementary symmetric functions defined as 
$$
e_k(X_1,\ldots,X_n) = \sum_{1 \le i_1 < i_2 < \ldots < i_k \le n} X_{i_1} X_{i_2} \cdots X_{i_k}.
$$

\begin{lemma}
\label{coeffbase}
For any positive even integer $n$ and any $i \in [n]$, we have 
$$
\sum_{j=1}^n \sum_{k=0}^n a_{j,k} e_k(q(X_1),\ldots,q(X_n))p(X_i)^{\frac{n}{2}}(q(X_i)^{-j}-p(X_i)^{-j}) = 
\prod_{k=1}^n (p(X_i) - q(X_k)),
$$ 
where 
$$
a_{j,k} = 
\begin{cases} (-1)^k \left( \binom{\frac{n}{2}+j-k}{n-2k-j} + \binom{\frac{n}{2}+j-k-1}{n-2k-j-1}\right), & \text{if 
$k \le \frac{n}{2}$}, \\
                       (-1)^{k+1} \left[k=\frac{n}{2}+j\right], & \text{otherwise}.
\end{cases} 
$$    
\end{lemma}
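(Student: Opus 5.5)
The plan is to expand the right-hand side in elementary symmetric functions and reduce the claim to a family of one-variable identities, one for each $k$. Write $P=p(X_i)$, $Q=q(X_i)$ and $m=\frac n2$. Then
$$
\prod_{k=1}^n (P-q(X_k))=\sum_{k=0}^n (-1)^k e_k(q(X_1),\ldots,q(X_n))\,P^{n-k}.
$$
We may also add any multiple of the vanishing expression
$$
0=\prod_{k=1}^n(Q-q(X_k))\,P^mQ^{-m}=\sum_{k=0}^n(-1)^k e_k(q(X_1),\ldots,q(X_n))\,P^mQ^{m-k},
$$
which is zero because $Q$ is one of the $q(X_k)$.

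\textbf{Step 1: the terms with $k>m$.} Here only $j=k-m$ contributes, and the left-hand coefficient of $e_k$ is
$$
(-1)^{k+1}P^m(Q^{m-k}-P^{m-k})=(-1)^kP^{n-k}-(-1)^kP^mQ^{m-k}.
$$
This forces us to subtract exactly the vanishing expression above. The lemma then follows, coefficientwise in the $e_k$, once we show that for each $0\le k\le m$, with $r=m-k\ge 0$,
$$
\sum_{j} a_{j,k}\,(Q^{-j}-P^{-j}) = (-1)^k\,(P^{r}-Q^{r}).
$$
Proving these identities suffices; we never need independence of the $e_k$.

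\textbf{Step 2: rewrite in terms of $\pi=PQ$.} Use $Q^{-j}-P^{-j}=(P^j-Q^j)/(PQ)^j$ and divide by $P-Q$. Both sides become expressions in complete homogeneous functions:
$$
\sum_j a_{j,k}\,h_{j-1}(P,Q)\,\pi^{-j}=(-1)^k h_{r-1}(P,Q).
$$
The key observation is that $P$ and $Q$ satisfy a simple algebraic relation. With $t=X+X^{-1}$ one computes $\pi=PQ=-(X-1)^2/X=2-t$ and $P+Q=t^2-t-2$. Hence
$$
P+Q=\pi(\pi-3).
$$
Using $h_{j-1}(P,Q)=\sum_{l}(-1)^l\binom{j-1-l}{l}(P+Q)^{j-1-2l}\pi^l$, both sides become Laurent polynomials in $\pi$ alone, after expanding $(\pi-3)^{j-1-2l}$ binomially. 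Since $\pi$ is a nonconstant function of $X$, it suffices to compare coefficients of powers of $\pi$.

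\textbf{Step 3: the binomial identities.} Comparing coefficients gives explicit double or triple sums of products of binomial coefficients and powers of $3$, weighted by the $a_{j,k}$. These should collapse via Chu--Vandermonde type summations. The two-term form $\binom{m+j-k}{n-2k-j}+\binom{m+j-k-1}{n-2k-j-1}$ in $a_{j,k}$ suggests a Pascal-type splitting that telescopes. Checking that these sums evaluate correctly, to $(-1)^k$ times the expansion of $h_{r-1}$ and to $0$ for negative powers of $\pi$, is the main obstacle.

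If the direct coefficient comparison is too messy, I would instead clear denominators and view the identity as a statement about a generating function in a formal variable marking $k$. Summing $\sum_j a_{j,k}z^j$ in closed form, via the standard generating function $\sum_j\binom{a+j}{b-j}z^j$ for Fibonacci/Chebyshev-type polynomials, should match $P^r-Q^r$ through the relation $P+Q=\pi(\pi-3)$.
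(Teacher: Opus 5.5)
Your reduction is sound. Adding the vanishing expression $P^mQ^{-m}\prod_{k}(Q-q(X_k))$ neatly replaces the paper's bookkeeping with $e_k=e_k^{(i)}+q(X_i)e_{k-1}^{(i)}$. It leads to exactly the one-variable identity the paper also reduces to: with $d=r=\frac n2-k$ and $c_j=\binom{d+j}{2d-j}+\binom{d+j-1}{2d-j-1}$,
\[
\sum_{j\ge 1} c_j\,(Q^{-j}-P^{-j})=P^d-Q^d .
\]
Your relations $PQ=\pi$ and $P+Q=\pi(\pi-3)$ are also correct. The gap is that this identity is the whole substance of the lemma, and Step~3 does not prove it. You only say that the triple sums obtained after expanding $(\pi-3)^{j-1-2l}$ ``should collapse'' by Chu--Vandermonde, and that a generating function ``should match''. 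No summation is carried out. It is also not evident that one works in the $\pi$-variable, because symmetrizing in $P,Q$ throws away the structure that makes the identity easy.

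The missing idea is to evaluate the $P$-part and the $Q$-part \emph{separately}, as functions of $X$. Take $d\ge 1$; the case $d=0$ is trivial since all $c_j$ vanish.
\begin{enumerate}
\item Multiply $\sum_j c_jP^{-j}$ by $P^{2d}$ and put $\ell=2d-j$, $D=3d$. This gives $s(D)+P\,s(D-2)$ with $s(D)=\sum_\ell\binom{D-\ell}{\ell}P^\ell$.
\item Pascal's rule yields $s(D)=s(D-1)+P\,s(D-2)$. This is a Fibonacci recursion whose characteristic roots are $X$ and $1-X$, because $X+(1-X)=1$ and $X(1-X)=-P$.
\item Hence $s(D)+P\,s(D-2)=X^D+(1-X)^D$, that is,
\[
\sum_j c_jP^{-j}=P^{-2d}X^{3d}+(-1)^dP^dX^{-3d}.
\]
\item Since $Q=p(X^{-1})$, replacing $X$ by $X^{-1}$ gives $\sum_j c_jQ^{-j}=Q^{-2d}X^{-3d}+(-1)^dQ^dX^{3d}$.
\item Using $Q=-PX^{-3}$, the difference of the two closed forms is exactly $P^d-Q^d$.
\end{enumerate}
Your fallback idea of Fibonacci/Chebyshev-type generating functions is the right family. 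It has to be tied to the roots $X,1-X$ of $z^2-z-P$, however, not to the relation $P+Q=\pi(\pi-3)$.
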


\begin{proof} 
Expanding the product on the right-hand side in the statement of the lemma yields
$$
\sum_{k=0}^n (-1)^k p(X_i)^{n-k} e_k(q(X_1),\ldots,q(X_n)).
$$
To simplify notation, we write $e_k$ for $e_k(q(X_1),\ldots,q(X_n))$ and $e^{(i)}_k$ for the same quantity but where $q(X_i)$ is 
omitted. Using the identity $e_k=e_k^{(i)}+q(X_i) e_{k-1}^{(i)}$ 
and the fact that the $e^{(i)}_k$'s are linearly independent for fixed $i$, reduces the problem to showing that 
the coefficients of the $e^{(i)}_k$'s are the same on both sides. 

On the right-hand side, the coefficient of $e_{k}^{(i)}$ is 
$$
(-1)^{k} \left(p(X_i)^{n-k}-p(X_i)^{n-k-1} q(X_i) \right),
$$
while it is 
$$
\sum_{j=1}^n (a_{j,k}+a_{j,k+1}q(X_i)) p(X_i)^{\frac{n}{2}}(q(X_i)^{-j}-p(X_i)^{-j}) 
$$
on the left-hand side. For the case $k \ge \frac{n}{2}$, it is easy to see that the coefficients 
coincide and thus we assume 
$k < \frac{n}{2}$ from now on.

We need to show 
$$
\sum_{j=1}^n (a_{j,k}+ a_{j,k+1} q(X)) (q(X)^{-j}-p(X)^{-j}) =
(-1)^k (p(X)^{\frac{n}{2}-k}-p(X)^{\frac{n}{2}-k-1} q(X))
$$
for $k=0,1,\ldots,n-1$. 
For this, it suffices to show 
$$
\sum_{j=1}^n a_{j,k} \left(q(X)^{-j}-p(X)^{-j} \right) = (-1)^k \left( p(X)^{\frac{n}{2}-k} - q(X)^{\frac{n}{2}-k} \right),
$$
which is equivalent to 
$$
\sum_{j \ge 1} \left( \binom{d+j}{2d-j} + \binom{d-1+j}{2d-1-j}\right) \left(q(X)^{-j}-p(X)^{-j} \right) = p(X)^{d} - q(X)^{d},
$$
where we have set $d=\frac{n}{2}-k$ and which we need to show for $d \ge 1$. 

In order to show this, it suffices to show the following two identities 
\begin{align*} 
\sum_{j=1}^{2d} \left( \binom{d+j}{2d-j} + \binom{d+j-1}{2d-j-1} \right) q(X)^{-j} &= (-1)^d q(X)^d X^{3d} + q(X)^{-2d} X^{-3d} \\
\sum_{j=1}^{2d} \left( \binom{d+j}{2d-j} + \binom{d+j-1}{2d-j-1} \right) p(X)^{-j} &= (-1)^d p(X)^d X^{-3d}+ p(X)^{-2d} X^{3d},
\end{align*}
where we have used $q(X)=-p(X) X^{-3}$.
Now, as $q(X)=p(X^{-1})$, it is obvious that the two identities are equivalent and so it suffices to show the second.

We multiply the identity with $p(X)^{2d}$ and then set $m=2d-j$. 

Setting $D=3d$ and 
$$
s(D)= \sum_{m=0}^{\frac{D}{2}}  \binom{D-m}{m} p(X)^{m}, 
$$
we observe that it suffices to show 
 $$
s(D) + s(D-2) p(X) = (1-X)^{D}+X^D
$$
for any positive integer $D$, after checking that the change of the summations bounds is 
eligible. 

The above recursion follows from the next recursion, which is implied by the identity 
$\binom{n}{k}=\binom{n-1}{k}+\binom{n-1}{k-1}$ and valid for any positive integer $D$, by induction with respect to $D$.
$$
s(D) = s(D-1) + s(D-2) p(X)
$$
 \end{proof} 
 
\begin{proof}[Proof of Lemma~\ref{basecase}]
We use induction with respect to $n$. The base case is easy to see. 
 
As for the induction step, we first consider the case $n$ is odd. 
By the definition of $D_s(X_1,\ldots,X_n)$, we have 
\begin{multline*} 
D_{ \frac{n+1}{2} }(X_1,\ldots,X_n) = 
\asym_{X_1,\ldots,X_n} \left[ 
p(X_1)^{ \frac{n-1}{2}}  q(X_1)^{-n} \prod_{j=1}^n \left( p(X_j)-q(X_1) \right) \right. \\
\left. \times 
\prod_{i=2}^{n} p(X_i)^{ \frac{n-1}{2} -(i-1)} q(X_i)^{2(i-1)-2  \left( \frac{n-1}{2}\right) -1}
\prod_{2 \le i \le j \le n} (p(X_j)-q(X_i)) \right] \\
= \sum_{i=1}^{n} (-1)^{n+i} p(X_i)^{ \frac{n-1}{2} } q(X_i)^{-n} \prod_{k=1}^n \left( p(X_k)-q(X_i) \right)
D_{\frac{n-1}{2}}(X_1,\ldots,\widehat{X_i},\ldots,X_n).
\end{multline*} 
By induction and by Laplace expansion, this is equal to 
\begin{multline} 
\label{nodd} 
\prod_{i=1}^n p(X_i)^{-1} \det_{1 \le i, j \le n}
\left( \begin{cases} p(X_i)^{ \frac{n+1}{2}} \left( q(X_i)^{-j} - p(X_i)^{-j} \right) & j \le n-1 \\
        p(X_i)^{ \frac{n+1}{2} } q(X_i)^{-n} \prod_{k=1}^n \left( p(X_k)-q(X_i) \right) & j=n \end{cases} \right).
\end{multline} 
We have completed the proof of this case as soon as we have shown that the difference of the last column of the matrix underlying the determinant in the 
lemma multiplied with $\prod_{i=1}^n p(X_i)$ and the last column of the above matrix can be written as a linear combination of the first $n-1$ columns.
Indeed, starting with the last column of the above matrix, we have 
$$
p(X_i)^{ \frac{n+1}{2} } q(X_i)^{-n} \prod_{k=1}^n \left( p(X_k)-q(X_i) \right) \\ 
=  p(X_i)^{ \frac{n+1}{2}} \sum_{j=0}^n e_j(p(X_1),\ldots,p(X_n)) (-1)^{n+j} q(X_i)^{-j}.
$$
As 
$$
p(X_i)^{ \frac{n+1}{2} }  \sum_{j=0}^n e_j(p(X_1),\ldots,p(X_n)) (-1)^{n+j} p(X_i)^{-j} = 
p(X_i)^{ \frac{n+1}{2}-n} \prod_{j=1}^n \left( p(X_j)-p(X_i) \right)=0,
$$
the above is equal to 
\begin{multline*} 
p(X_i)^{ \frac{n+1}{2} }  \sum_{j=0}^n e_j(p(X_1),\ldots,p(X_n)) (-1)^{n+j} \left( q(X_i)^{-j}- p(X_j)^{-j} \right) \\ =
p(X_i)^{ \frac{n+1}{2} }  \sum_{j=1}^{n-1} e_j(p(X_1),\ldots,p(X_n)) (-1)^{n+j} \left( q(X_i)^{-j}- p(X_j)^{-j} \right) \\
+ \left[ \prod_{j=1}^n p(X_j) \right] p(X_i)^{ \frac{n+1}{2}} \left( q(X_i)^{-n}- p(X_j)^{-n} \right)
\end{multline*}
and we are done with the case $n$ is odd, as the sum over $j \in \{1,\ldots,n-1\}$ in the last expression provides a linear combination of the first 
$n-1$ columns of the matrix underlying the determinant in \eqref{nodd}, and thus the last column of the matrix there can be replaced by the last term 
in this expression, which is just the last column of the matrix underlying the determinant in the claim multiplied by $\prod_{j=1}^n p(X_j)$, 
which cancels the prefactor in \eqref{nodd}. 

Next we consider the case $n$ is even. In this case, 
\begin{multline*} 
D_{\lfloor \frac{n+1}{2} \rfloor}(X_1,\ldots,X_n) =  \asym_{X_1,\ldots,X_n} 
\left[ \prod_{i=1}^{\frac{n}{2}} p(X_i)^{\frac{n}{2}-i} q(X_i)^{2i-n-1} \right.
\\ \left. 
\times \prod_{1 \le i \le j \le n-1} (p(X_j)-q(X_i)) \prod_{k=1}^n (p(X_n)-q(X_k)) \right] \\
= \sum_{i=1}^n  (-1)^{n+i} \prod_{k=1}^n \left(p(X_i)-q(X_k) \right) D_{\frac{n}{2}}(X_1,\ldots,\widehat{X_i},\ldots,X_n) \\
=  \det_{1 \le i, j \le n}
\left( \begin{cases} p(X_i)^{\frac{n}{2}} \left( q(X_i)^{-j} - p(X_i)^{-j} \right) & j \le n-1 \\
         \prod_{k=1}^n \left( p(X_i)-q(X_k) \right) & j=n \end{cases} \right).
\end{multline*} 
Now we use Lemma~\ref{coeffbase} to again write the last column of the matrix underlying this determinant as a linear combination of 
the first $n-1$ columns and the last column of the matrix underlying the determinant, where the coefficient of the last column needs to be 
$1$. The latter is the case as $a_{n,k}=[k=0]$. 
\end{proof}

Before we can complete the proof of Theorem~\ref{antisymtodet}, we still need the following lemma.

\begin{lemma}
\label{generalcoeff}
For any $i,n,s$ with $1 \le i \le n$ and  
$0 \le s \le \lfloor \frac{n+1}{2} \rfloor$, we have 
\begin{multline*} 
\sum_{j=1}^n \sum_{k=0}^n b_{j,k} e_k(q(X_1),\ldots,q(X_n)) p(X_i)^s \left( p(X_i)^{j-(1+[j<2s])s}-q(X_i)^{j-(1+[j<2s])s} \right) \\
=\prod_{k=1}^n (p(X_i) - q(X_k)),
\end{multline*}                   
where 
$$
b_{j,k} = 
\begin{cases} 
(-1)^{k+1} \left( \binom{s+n-k-j}{2 (-2s+n-k)+j} 
+ \binom{s+n-k-j-1}{2 (-2s+n-k)+j-1} \right),
& n-2s+1 \le k \le n -s-1 \,  \&   \, j \le 2s-1, \\
(-1)^{k} [j+k=n+s], &  n - s-1 < k \, \&   \, j \le 2s-1, \\
(-1)^k [j+k=n], & \text{otherwise}. 
\end{cases} 
$$

\end{lemma}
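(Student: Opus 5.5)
The plan is to copy the coefficient-comparison argument from the proof of Lemma~\ref{coeffbase} and then reduce the one nontrivial case to the binomial identity already proved there. Abbreviate $p=p(X_i)$, $q=q(X_i)$ and $c_j=j-(1+[j<2s])s$. Write $e_k$ for $e_k(q(X_1),\ldots,q(X_n))$ and $e_k^{(i)}$ for the same quantity with $q(X_i)$ omitted.

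\textbf{Reduction to a column identity.} Expand the right-hand side as $\sum_{k=0}^n(-1)^k p^{n-k}e_k$. Then substitute $e_k=e_k^{(i)}+q\,e_{k-1}^{(i)}$ on both sides. Since the $e_k^{(i)}$ with $0\le k\le n-1$ are linearly independent over rational functions in $X_i$, it suffices to compare their coefficients. This means proving, for $0\le k\le n-1$,
$$
\sum_{j=1}^n \bigl(b_{j,k}+q\,b_{j,k+1}\bigr)p^s\bigl(p^{c_j}-q^{c_j}\bigr)=(-1)^k\bigl(p^{n-k}-p^{n-k-1}q\bigr).
$$
I would deduce this from the single-index identity
$$
T_k:\qquad \sum_{j=1}^n b_{j,k}\,p^s\bigl(p^{c_j}-q^{c_j}\bigr)=(-1)^k p^s\bigl(p^{m}-q^{m}\bigr),\qquad m=n-k-s,
$$
required for all $0\le k\le n$. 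Indeed, for every integer $m$, including negative ones,
$$
(p^m-q^m)-q(p^{m-1}-q^{m-1})=p^m-qp^{m-1}.
$$
Hence $T_k+q\,T_{k+1}$ gives exactly $(-1)^k(p^{n-k}-p^{n-k-1}q)$. Note that the target in $T_k$ is $p^s(p^m-q^m)$ and not $p^{n-k}-q^{n-k}$; this is what makes the ``otherwise'' case match.

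\textbf{The two easy cases.} These are checked directly from the definition of $b_{j,k}$.
\begin{itemize}[label={$\circ$}]
\item If $k\le n-2s$: only the ``otherwise'' branch can contribute. A term with $j\le 2s-1$ and $j=n-k$ would need $n-k\le 2s-1$, which is impossible, so the only term is $j=n-k\ge 2s$. Then $c_j=n-k-s=m$ and $T_k$ holds.
\item If $k\ge n-s$: the middle branch applies for $j\le 2s-1$. The ``otherwise'' term $j=n-k\le s\le 2s-1$ is not active there, so the only surviving term is $j=n+s-k$. This satisfies $j\le 2s-1$ whenever $k\ge n-s+1$, or $k=n-s$ with $s\ge1$ (the degenerate cases $s=0$ and $k=n$ are checked by hand). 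Then $c_j=j-2s=m$, which may be negative, and the sign is $(-1)^k$, so $T_k$ holds.
\end{itemize}

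\textbf{The main case.} Let $n-2s+1\le k\le n-s-1$, so $d:=m=n-k-s\in[1,s-1]$. Every $j$ with a nonzero coefficient satisfies $j\le 2s-1$, and $c_j=j-2s$. Substituting $j'=2s-j$ turns the binomials into
$$
\binom{s+n-k-j}{2(n-k-2s)+j}=\binom{d+j'}{2d-j'},\qquad \binom{s+n-k-j-1}{2(n-k-2s)+j-1}=\binom{d+j'-1}{2d-j'-1}.
$$
After dividing by $(-1)^k p^s$ and absorbing the sign $(-1)^{k+1}$, $T_k$ becomes
$$
\sum_{j'\ge1}\left(\binom{d+j'}{2d-j'}+\binom{d+j'-1}{2d-j'-1}\right)\bigl(q^{-j'}-p^{-j'}\bigr)=p^d-q^d.
$$
This is precisely the identity established inside the proof of Lemma~\ref{coeffbase}, via the recursion $s(D)=s(D-1)+s(D-2)p(X)$ with $D=3d$.

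The main obstacle is bookkeeping in this step: one must check that the summation range $j'\in[1,2s-1]$ contains all $j'\le 2d$ with nonzero coefficient. This holds because $2d\le 2s-2$. One must also check that the boundary terms at $j'=2d$ and $j'=2d-1$ match the conventions used in Lemma~\ref{coeffbase}.
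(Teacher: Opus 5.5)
Your proposal is correct and follows the paper's proof essentially step for step: coefficient comparison in the $e_k^{(i)}$, reduction to the single-index identity $T_k$, the same three-way case split in $k$, and the substitution $j'=2s-j$, $d=n-k-s$ that lands on the identity proved inside Lemma~\ref{coeffbase}. One small slip: at $k=n-s$ (with $s\ge 1$) the candidate index is $j=n+s-k=2s$, which is \emph{not} $\le 2s-1$, so no term survives there; this is harmless because $m=0$ makes both sides of $T_k$ vanish, which is exactly how the paper treats that boundary case.
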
 

\begin{proof}
As in the proof of Lemma~\ref{coeffbase}, we expand the product on the right-hand side in the statement of the lemma, then apply 
the identity $e_k=e_k^{(i)}+q(X_i) e_{k-1}^{(i)}$ (using the same notation) and compare the coefficients of 
the $e^{(i)}_k$'s on both sides. Thus we need to show 
\begin{multline*} 
(-1)^{k} \left(p(X)^{n-k}-p(X_i)^{n-k-1} q(X) \right) \\
=
\sum_{j=1}^n (b_{j,k}+b_{j,k+1}q(X)) p(X)^{s} \left( p(X)^{j-(1+[j<2s])s}-q(X)^{j-(1+[j<2s])s} \right)
\end{multline*} 
for $k=0,1,\ldots,n-1$. It suffices to show 
$$ 
(-1)^{k} \left(p(X)^{n-k-s}-q(X)^{n-k-s} \right) \\
=
\sum_{j=1}^n b_{j,k} \left( p(X)^{j-(1+[j<2s])s}-q(X)^{j-(1+[j<2s])s} \right).
$$
The sum on the right-hand side is split into the range $1 \le j \le 2s-1$ and $2s \le j \le n$. 
As for the second part, note that 
$$
\sum_{j=2s}^n (-1)^k [j+k=n] \left( p(X)^{j-s}-q(X)^{j-s} \right)
= [n-k \ge 2s] (-1)^k \left( p(X)^{n-k-s}-q(X)^{n-k-s} \right),
$$
so that we need to show 
$$ 
[n-k \le 2s-1] (-1)^{k} \left(p(X)^{n-k-s}-q(X)^{n-k-s} \right) \\
=
\sum_{j=1}^{2s-1} b_{j,k} \left( p(X)^{j-2s}-q(X)^{j-2s} \right).
$$
Next we distinguish between the cases $k \le n-2s$, $n-2s+1 \le k \le n-s-1$ and $n-s \le k$.
If $k \le n-2s$, then the left-hand side vanishes. We have $b_{j,k}=(-1)^k [j+k=n]$ in this case, so that 
$b_{j,k} \not= 0$ only if $j=n-k$, but since $n-k \ge 2s$, this $j$ is not in the range. If 
$n-s \le k$, then the left hand side is 
$$
(-1)^{k} \left(p(X)^{n-k-s}-q(X)^{n-k-s} \right) 
$$ 
unless $s=0$, but this case is easy to check. Then we have $b_{j,k}=(-1)^k [j+k=n+s]$, so that the 
right-hand side is equal to 
$$
(-1)^k \left( p(X)^{n+s-k-2s}-q(X)^{n+s-k-2s} \right)
$$
which matches the left-hand side, 
since $1 \le n+s-k \le 2s-1$ unless $n+s-k=2s$. If $n+s-k=2s$ then the right-hand side is zero, but so is the left-hand side.

It remains to show 
\begin{multline*} 
\left(p(X)^{n-k-s}-q(X)^{n-k-s} \right) \\ = \sum_{j=1}^{2s-1}  \left( \binom{s+n-k-j}{2 (-2s+n-k)+j} 
+ \binom{s+n-k-j-1}{2 (-2s+n-k)+j-1} \right) \left( q(X)^{j-2s}-p(X)^{j-2s} \right)
\end{multline*} 
if $n-2s+1 \le k \le n-s-1$, which we will show for any integer $s$. For this, it suffices to show the following two identities.
\begin{multline*} 
\sum_{j=1}^{2s-1} \left( \binom{s+n-k-j}{2(-2s+n-k)+j} + \binom{s+n-k-j-1}{2(-2s+n-k)+j-1} \right) p(X)^{j-2s} \\ = 
(-1)^{n+s+k} p(X)^{n-k-s} X^{-3(n-k-s)} + p(X)^{-2n+2k+2s} X^{-3(k-n+s)} 
\end{multline*} 
\begin{multline*} 
\sum_{j=1}^{2s-1} \left( \binom{s+n-k-j}{2(-2s+n-k)+j} + \binom{s+n-k-j-1}{2(-2s+n-k)+j-1} \right) q(X)^{j-2s} \\= 
(-1)^{n+s+k} q(X)^{n-k-s} X^{3(n-k-s)} + q(X)^{-2n+2k+2s} X^{3(k-n+s)}
\end{multline*} 
To see this, we have used $q(X)=-p(X) X^{-3}$. As $q(X)=p(X^{-1})$, the two identities are equivalent and it suffices to 
show the first.

We perform the index transformation $i=2s-j$ in the first identity and obtain the following equivalent identity.
\begin{multline*} 
\sum_{i=1}^{2s-1} \left( \binom{-s+n-k+i}{2(-s+n-k)-i} + \binom{-s+n-k+i-1}{2(-s+n-k)-i-1} \right) p(X)^{-i} \\ = 
(-1)^{n+s+k} p(X)^{n-k-s} X^{-3(n-k-s)} + p(X)^{-2n+2k+2s} X^{-3(k-n+s)}
\end{multline*} 
The sum on the left-hand side does not change if we extend the summation over all $i \ge 1$. Also we set $d=-s+n-k$ and obtain 
the following equivalent identity 
$$
\sum_{i \ge 1} \left( \binom{d+i}{2d-i} + \binom{d+i-1}{2d-i-1} \right) p(X)^{-i} \\ = 
(-1)^{d} p(X)^{d} X^{-3 d} + p(X)^{-2d} X^{3d}
$$
This identity was proved in Lemma~\ref{coeffbase}. 
\end{proof} 

\begin{proof}[Proof of Theorem~\ref{antisymtodet}]
The proof is by induction with respect to $n$ and the base case of the induction, which is $n=2s-1$, is taken care of in Lemma~\ref{basecase}. Assume 
$n>2s-1$. Then, 
\begin{multline*}
D_s(X_1,\ldots,X_n) =  \asym_{X_1,\ldots,X_n} \left[ \prod_{i=1}^s p(X_i)^{s-i} q(X_i)^{2i-2s-1} \right. \\
\left. \times 
\prod_{1 \le i \le j \le n-1} (p(X_j)-q(X_i)) \prod_{k=1}^n (p(X_n)-q(X_k)) \right] \\
= \sum_{i=1}^{n} (-1)^{n+i} \prod_{k=1}^n (p(X_i)-q(X_k)) D_s(X_1,\ldots,\widehat{X_i},\ldots,X_n). 
\end{multline*} 
By induction, this is equal to 
$$
\det_{1 \le i, j \le n}
\left( \begin{cases} p(X_i)^s \left( p(X_i)^{j-(1+[j<2s])s}-q(X_i)^{j-(1+[j<2s])s} \right)  & j \le n-1 \\
         \prod_{k=1}^n \left( p(X_i)-q(X_k) \right) & j=n \end{cases} \right).
$$
The result then follows from Lemma~\ref{generalcoeff} in the same way as in the proof of Lemma~\ref{basecase}. \end{proof}

\section{A generalization of the Cauchy determinant}
\label{cauchysec} 

Recall that the Cauchy determinant evaluates to the following simple product.
\begin{equation}
\label{cauchy}  
\det_{1 \le i,j \le n} \left( \frac{1}{1-X_i Y_j} \right) = \frac{\prod_{1 \le i < j \le n} (X_j-X_i)(Y_j-Y_i)}{\prod_{i,j=1}^n (1-X_i Y_j)}
\end{equation} 
We present a common generalization of this evaluation and Lemma~\ref{first}.

\begin{theorem} 
\label{cauchyg} Let $n$ be a positive integer and $f(X)$, say, a rational function in $X$, then
\begin{multline*} 
\asym_{X_1,\ldots,X_n} \left[ \prod\limits_{1 \le i \le j \le n} \frac{f(X_j)-X_i}{(1-Y_i f(X_j))(1-X_i Y_j)} \right] = 
\frac{\det_{1 \le i, j \le n} \left(\frac{f(X_i)-X_i}{(1-X_i Y_j)(1-f(X_i)Y_j)}\right)}{\prod_{1 \le i < j \le n}(Y_j-Y_i)} \\
= \det_{1 \le i, j \le n} \left( \frac{f(X_j)^i}{\prod_{k=1}^n (1-f(X_j)Y_k)} - \frac{X_j^i}{\prod_{k=1}^n (1-X_jY_k)} \right)
\end{multline*} 
\end{theorem}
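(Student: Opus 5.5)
The plan is to establish the two equalities separately, and in both cases to argue by induction on $n$, expanding along the variable that carries the "last" factors, exactly as in the proofs of Lemma~\ref{basecase} and Theorem~\ref{antisymtodet}.

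\textbf{Left side as a determinant in $X_j$ and $Y_k$.} Write $g(X)=f(X)$ and set
$$
E_n=\asym_{X_1,\ldots,X_n}\left[\prod_{1\le i\le j\le n}\frac{f(X_j)-X_i}{(1-Y_if(X_j))(1-X_iY_j)}\right].
$$
Isolate the factors with $j=n$, namely $\prod_{i=1}^n \frac{f(X_n)-X_i}{(1-Y_i f(X_n))(1-X_iY_n)}$. The factor $\prod_{i=1}^n(1-X_iY_n)^{-1}$ is symmetric in the $X$'s and can be pulled out of the antisymmetrizer. Summing over the value $X_l$ placed in position $n$ then gives the recursion
$$
E_n=\prod_{i=1}^n\frac{1}{1-X_iY_n}\sum_{l=1}^n(-1)^{n+l}\frac{\prod_{k=1}^n(f(X_l)-X_k)}{\prod_{k=1}^n(1-Y_kf(X_l))}\,E_{n-1}(X_1,\ldots,\widehat{X_l},\ldots,X_n;Y_1,\ldots,Y_{n-1}).
$$

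\textbf{Induction for the third expression.} Assume
$$
E_{n-1}=\det_{1\le i,j\le n-1}\left(\frac{f(X_j)^i}{\prod_{k<n}(1-f(X_j)Y_k)}-\frac{X_j^i}{\prod_{k<n}(1-X_jY_k)}\right).
$$
By Laplace expansion along a last row, $E_n$ becomes $\prod_i(1-X_iY_n)^{-1}$ times an $n\times n$ determinant. Its first $n-1$ rows have these entries with only $Y_1,\ldots,Y_{n-1}$; its last row is $\prod_k(f(X_j)-X_k)/\prod_{k\le n}(1-Y_kf(X_j))$. Two reductions then have to be carried out:
\begin{enumerate}
\item multiply each column $j$ by suitable factors of the form $(1-f(X_j)Y_n)^{-1}$ and $(1-X_jY_n)^{-1}$, absorbing the global prefactor, so that each entry carries the full product over $k\le n$;
\item perform row operations so that the last row becomes $f(X_j)^n/\prod(\cdot)-X_j^n/\prod(\cdot)$.
\end{enumerate}
For step 2 I would expand $\prod_k(f(X_j)-X_k)=\sum_m(-1)^{n-m}e_{n-m}(X)f(X_j)^m$. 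The key fact, as in the proof of Lemma~\ref{basecase}, is that the same sum with $f(X_j)$ replaced by $X_j$ vanishes. So the last row can be replaced by a combination of the differences $f(X_j)^m/\prod-X_j^m/\prod$, where the coefficients $e_{n-m}(X)$ are independent of $j$ and hence allowed. The top term $m=n$ gives the desired row; the lower terms must be eliminated by the other rows.

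\textbf{Main obstacle: the $Y_n$ mismatch.} The lower rows only carry denominators over $k<n$. To handle this I would use the partial-fraction identity
$$
\frac{1}{1-zY_n}=1+\frac{zY_n}{1-zY_n},
$$
which lets rows of the form $z^i/\prod_{k<n}$ be written as combinations of $z^i/\prod_{k\le n}$ and $z^{i+1}/\prod_{k\le n}$. The row space spanned by the differences $z^i/\prod_{k\le n}$, $i=1,\ldots,n$, then matches after a unitriangular change of basis. Getting this triangularity and determinant-one bookkeeping right, consistently with the column scalings, is the step I expect to be delicate. A cleaner route, which I would try first, avoids the issue altogether: prove the identity as an equality of rational functions in the $Y$'s. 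Both sides are antisymmetric in $X$, and after multiplying by $\prod_{j,k}(1-X_jY_k)(1-f(X_j)Y_k)$ both are polynomials in each $Y_k$ of bounded degree. It would then suffice to check them at enough specializations, for instance $Y_n=0$, which reduces to $n-1$ by induction, together with the residues at $Y_n=1/X_l$ and $Y_n=1/f(X_l)$.

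\textbf{Middle expression.} For the equality with the Cauchy-type determinant
$$
\det\left(\frac{f(X_i)-X_i}{(1-X_iY_j)(1-f(X_i)Y_j)}\right)\Big/\prod_{i<j}(Y_j-Y_i),
$$
use the partial fractions
$$
\frac{f-X}{(1-XY)(1-fY)}=\frac{1}{Y}\left(\frac{1}{1-fY}-\frac{1}{1-XY}\right)
$$
together with the Cauchy--Binet-type expansion
$$
\frac{1}{1-zY_j}=\sum_m z^mY_j^m,
$$
or equivalently the generating-function form of \eqref{cauchy}. Factoring row $i$ as a function of $(f(X_i),X_i)$ evaluated against the Vandermonde-like functions of the $Y_j$, and dividing by $\prod_{i<j}(Y_j-Y_i)$, turns the matrix into the third determinant. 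This is the standard bialternant-to-Schur manipulation, in the spirit of \eqref{limit1g}, applied in the $Y$ variables: $\det(1/(1-z_iY_j))/\Delta(Y)$ equals $\det\left(z_i^{\,j-1}/\prod_k(1-z_iY_k)\right)$ by \eqref{cauchy}-style column reduction. Combined with the $1/Y$ factor, which shifts exponents from $j-1$ to $j$, and multilinearity in the difference, this yields the claimed form. I expect this part to be routine once the first equality is established.
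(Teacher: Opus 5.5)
Your recursion for $E_n$ is correct: you peel off the factors with $j=n$, pull $\prod_i(1-X_iY_n)^{-1}$ out of the antisymmetrizer, and Laplace-expand along the last row. Your argument for the equality of the second and third expressions is also fine and amounts to the paper's use of \eqref{limit1g}; the exponent shift is cleanest via $\frac{f-X}{(1-XY)(1-fY)}=\frac{f}{1-fY}-\frac{X}{1-XY}$. The gap is in closing the recursion, which is the whole content of the first equality. Your step~1 is not a legal operation. In rows $1,\dots,n-1$ of the matrix $M$ produced by the Laplace expansion, the two terms of each entry lack different factors, $1-f(X_j)Y_n$ and $1-X_jY_n$. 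A column can only be multiplied by one scalar, and the only one you can afford is $(1-X_jY_n)^{-1}$ from the prefactor. Your step~2 is mis-aimed as well. Suppose the terms $m<n$ of $\prod_k(f(X_j)-X_k)=\sum_m(-1)^{n-m}e_{n-m}(X)f(X_j)^m$ were eliminated by the other rows through a determinant-one change of basis. You would then get $E_n=\prod_j(1-X_jY_n)^{-1}\det N$, with $N$ the target matrix, rather than $E_n=\det N$. The fallback (polynomials in $Y_n$, evaluation at $Y_n=0$, residues at $Y_n=1/X_l,\,1/f(X_l)$) is only named. The residue comparisons, which would carry the real content, are not done. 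Even $Y_n=0$ does not follow from the induction hypothesis alone; it needs the second fact below.

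Here is what is missing. Put $\psi_m(z)=z^m/\prod_{k=1}^{n}(1-zY_k)$ and $\phi_m(z)=z^m/\prod_{k=1}^{n-1}(1-zY_k)=\psi_m(z)-Y_n\psi_{m+1}(z)$. Let $N_m$ and $\Phi_m$ be the row vectors $\bigl(\psi_m(f(X_j))-\psi_m(X_j)\bigr)_{1\le j\le n}$ and $\bigl(\phi_m(f(X_j))-\phi_m(X_j)\bigr)_{1\le j\le n}$. The first $n-1$ rows of $M$ are $\Phi_i=N_i-Y_nN_{i+1}$, with no rescaling at all. Your vanishing observation (replacing $f(X_j)$ by $X_j$) turns the last row into $\sum_{m=0}^{n}(-1)^{n-m}e_{n-m}(X)N_m$. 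Modulo the first $n-1$ rows, $N_m\equiv Y_n^{n-m}N_n$ for $m\ge 1$ and $N_0\equiv \Phi_0+Y_n^nN_n$. So the last row is congruent to $\prod_j(1-X_jY_n)\,N_n+(-1)^ne_n(X)\,\Phi_0$. This gives two facts you did not identify:
\begin{itemize}
\item The lower terms are not eliminated; they assemble into exactly the factor that cancels the prefactor.
\item The leftover $\Phi_0$ lies in the span of $\Phi_1,\dots,\Phi_{n-1}$, because $\sum_{m=0}^{n-1}(-1)^m e_m(Y_1,\dots,Y_{n-1})\phi_m(z)=1$ identically.
\end{itemize}
Hence $\det M=\prod_j(1-X_jY_n)\det(N_1,\dots,N_n)$. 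With these two facts your route works and compares the antisymmetrizer directly with the third expression. The paper instead peels off the factors with $i=1$ (removing $Y_1$) and compares with the Cauchy-type second expression. It closes the recursion through the column identity \eqref{LC}, which reduces to Lagrange interpolation.
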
  

Note that \eqref{cauchy} follows from the second equality in the statement of Theorem~\ref{cauchyg} by setting $f(X)=0$ and using the Vandermonde determinant evaluation to simplify the third expression. Lemma~\ref{first} follows on the other hand by considering the first and the third expression and setting $Y_i=0$.
Also note that the theorem generalizes \cite[Theorem~17]{gog}.
 
\begin{proof}
We start by showing the equality of the second and third expression.
By transposing and partial fraction decomposition, the second expression can be written as 
$$
\frac{\det_{1 \le i, j \le n} \left(\frac{f(X_j)}{1-f(X_j) Y_i} - \frac{X_j}{1-X_j Y_i}\right)}{\prod_{1 \le i < j \le n}(Y_j-Y_i)}  = 
\frac{\det_{1 \le i, j \le n} \left(\sum_{k \ge 0} (f(X_j)^{k+1}-X_j^{k+1}) Y_i^k \right)}{\prod_{1 \le i < j \le n}(Y_j-Y_i)}. 
$$
By \eqref{limit1g}, this is equal to 
\begin{equation}
\label{jacobitrudi} 
\det_{1 \le i, j \le n} \left(\sum_{k \ge 0} (f(X_j)^{i+k}-X_j^{i+k}) h_k(Y_1,\ldots,Y_n) \right) = 
\det_{1 \le i, j \le n} \left( \frac{f(X_j)^i}{\prod_{k=1}^n (1-f(X_j)Y_k)} - \frac{X_j^i}{\prod_{k=1}^n (1-X_jY_k)} \right).
\end{equation}

Next we show the equality of the first and the second expression. We multiply both expressions 
by $\prod_{1 \le i < j \le n}(Y_j-Y_i)$. 
We derive a recursion for the first expression in the same way as in the proof of Lemma~\ref{basecase} and Theorem~\ref{antisymtodet}, and then show that \eqref{jacobitrudi} satisfies the same recursion.

Concretely, let
$$A(X_1,\ldots,X_n;Y_1,\ldots,Y_n)=\asym_{X_1,\ldots,X_n} \left[ \prod\limits_{1 \le i \le j \le n} \frac{f(X_j)-X_i}{(1-Y_i f(X_j))(1-X_i Y_j)} \right]\prod_{1\leq i<j\leq n}(Y_j-Y_i),$$
then 
\begin{multline*} 
A(X_1,\ldots,X_n;Y_1,\ldots,Y_n) \\= \sum_{j=1}^{n} (-1)^{j+1} 
\prod_{i=2}^n (Y_i-Y_1) \prod_{i=1}^n \frac{f(X_i)-X_j}{(1-Y_1 f(X_i))(1-X_j Y_i)} 
A(X_1,\ldots,\widehat{X_j},\ldots,X_n;Y_2,\ldots,Y_n).
\end{multline*} 
We need to show that 
\begin{equation} 
\label{B}
\det_{1 \le i, j \le n} \left(\frac{f(X_i)-X_i}{(1-X_i Y_j)(1-f(X_i)Y_j)} \right)
\end{equation} 
satisfies the same recursion. This amounts to showing that 
\begin{multline*} 
 \det_{1 \le i, j \le n} \left(\frac{f(X_i)-X_i}{(1-X_i Y_j)(1-f(X_i)Y_j)} \right) \\
= \sum_{q=1}^{n} (-1)^{q+1} 
\prod_{p=2}^n (Y_p-Y_1) \prod_{p=1}^n \frac{f(X_p)-X_q}{(1-Y_1 f(X_p))(1-X_q Y_p)}  
\det_{1 \le i \le n, i \not=q \atop 2 \le j \le n} \left(\frac{f(X_i)-X_i}{(1-X_i Y_j)(1-f(X_i)Y_j)} \right). 
\end{multline*} 
By Laplace expansion, the right-hand side is equal to 
\begin{equation} 
\label{laplace} 
\det_{1 \le i,j \le n} \left(
\begin{cases} 
\prod_{p=2}^n (Y_p-Y_1) \prod_{p=1}^n \frac{f(X_p)-X_i}{(1-Y_1 f(X_p))(1-X_i Y_p)} & j=1 \\
\frac{f(X_i)-X_i}{(1-X_i Y_j)(1-f(X_i)Y_j)} & j \ge 2 
\end{cases} 
 \right).
\end{equation}The assertion now follows from 
\begin{multline}
\label{LC} 
\sum_{j=2}^n \frac{\prod_{p \not=1,j}(Y_1-Y_p) \prod_{p=1}^{n} (1-Y_j f(X_p))}
{\prod_{p \not=1,j}(Y_j-Y_p) \prod_{p=1}^{n} (1-Y_1 f(X_p))}
\frac{f(X_i)-X_i}{(1-X_i Y_j)(1-f(X_i)Y_j)} \\ + 
\prod_{p=2}^n (Y_p-Y_1) \prod_{p=1}^n \frac{f(X_p)-X_i}{(1-Y_1 f(X_p))(1-X_i Y_p)} 
= \frac{f(X_i)-X_i}{(1-X_i Y_1)(1-f(X_i)Y_1)}
\end{multline}for all $i \in \{1,\ldots,n\}$. 
To see this, let $W$ denote the matrix underlying the determinant in \eqref{B} and $W^{*}$ denote the matrix underlying the 
determinant in \eqref{laplace}, and note that $W$ and $W^{*}$ coincide except for the first column. In \eqref{LC}, 
we have expressed the first column of $W$ as a linear combination of the other columns of $W$ (or, equivalently, $W^*$) 
as the sum of a linear combination of the other columns and the first column $W^*$. 
As soon as \eqref{LC} is proven, this implies that the determinants of 
$W$ and $W^{*}$ coincide. 
Thus, in order to conclude the proof, we need to show \eqref{LC}. By incorporating the right-hand side as the $j=1$ summand 
on the left-hand side and after several further manipulations, the identity is equivalent to 

\begin{equation*}
	\sum_{j=1}^n\prod_{p\neq i}(Y_jZ_p-1)\prod_{p\neq j}\frac{1-X_iY_p}{Y_j-Y_p}=\prod_{p\neq i}(Z_p-X_i),
\end{equation*} which we need to show for all $i\in\{1,\ldots,n\}$ and where we have set $Z_k=f(X_k)$.
Dividing both sides by $X_i^{n-1}$ yields the equivalent equation 
\begin{equation*}
	\sum_{j=1}^n\prod_{p\neq i}(Y_jZ_p-1)\prod_{p\neq j}\frac{X_i^{-1}-Y_p}{Y_j-Y_p}=\prod_{p\neq i}(X_i^{-1}Z_p-1).
\end{equation*} Here the left-hand side is simply the Lagrange interpolation polynomial of the right-hand side as a polynomial in $X_i^{-1}$ over $\mathbb{Q}(Y_1,\dots,Y_n,Z_1,\dots,Z_n)$ with nodes at $Y_1,\dots,Y_n$. This proves \eqref{LC} and thus finishes the proof.

\end{proof}

There are two obvious questions related to this generalization of Lemma~\ref{first}: First, are there any applications similar to the applications of Lemma~\ref{first}? Second, is there a similar generalization of Theorem~\ref{antisymtodet}, and if so, does this have applications too? 

\section{Acknowledgements} 

We thank Sunil Chhita and Filippo Colomo for useful discussions.

\bibliographystyle{abbrvurl}

\bibliography{ASMLittlewood.bib}

\end{document}